\newcommand{\tensor}{\otimes}
\newcommand{\iso}{\cong}
\newcommand{\onto}{\to\hspace*{-.8em}\to}
\newcommand{\into}{\hookrightarrow}
\newcommand{\Frob}{\mathsf{\phi}}
\newcommand{\Adams}{\mathsf{\Psi}}
\newcommand{\Spec}{{\mbox{Spec}}}
\newcommand{\bF}{\mathbb{F}}
\newcommand{\bM}{\mathbb{M}}
\newcommand{\cO}{{\mathcal{O}}}
\newcommand{\bQ}{\mathbb{Q}}
\newcommand{\bZ}{\mathbb{Z}}
\def\lra{\longrightarrow}
\def\map#1{{\buildrel #1 \over \lra}} 
\def\length{\operatorname{length}}
\def\cL{\mathcal L}
\def\cO{\mathcal O}
\newcommand{\bP}{\mathbb{P}}
\def\m{\mathfrak m}
\def\dm{\operatorname{dim}}
\def\chr{\operatorname{char}}
\theoremstyle{plain} 
\newtheorem{thm}[equation]{Theorem}
\newtheorem{cor}[equation]{Corollary}
\newtheorem{lem}[equation]{Lemma}
\newtheorem*{state}{New Intersection Theorem}
\theoremstyle{remark}
\newtheorem{rem}[equation]{Remark}
\begin{document}

\bibliographystyle{plain}

\title[New proof of  NIT]{A new proof of the New Intersection Theorem}
\author{Greg Piepmeyer}
\address{
Department of Mathematics \\
University of Nebraska -- Lincoln \\
Lincoln \\
NE \\ 68588-0130 \\
U.S.A.}
\email{gpiepmeyer2@math.unl.edu}

\author{Mark E.~ Walker}
\thanks{The second author was partially supported by the NSA and the NSF
  (DMS-0601666).} 
\address{
Department of Mathematics \\
University of Nebraska -- Lincoln \\
Lincoln \\
NE \\ 68588-0130 \\
U.S.A.}
\email{mwalker5@math.unl.edu}

\begin{abstract}
In 1987 Roberts completed the proof of 
the New Intersection Theorem (NIT) by
settling the mixed characteristic case using local Chern characters,
as developed by Fulton and also by Roberts.  His proof has been the
only one recorded of the NIT in mixed characteristic.

This paper gives a new proof of this theorem, one which mostly
parallels Roberts' original proof, but avoids the use of local Chern
characters. Instead, the proof here uses Adams operations on
$K$-theory with supports as developed by Gillet-Soul\'e.
\end{abstract}

\subjclass[2000]{13D22, 13D15, 19A99}

\keywords{New Intersection Theorem, Adams operations, Frobenius}

\maketitle

\begin{state} \label{NIT} 
Let $A$ be a (commutative, Noetherian) local
ring. If the complex of finite rank free $A$-modules
\begin{equation*}
0 \to F_n \to \cdots \to  F_1 \to F_0 \to 0
\end{equation*}
has non-zero homology of finite length, then $n \geq \dm(A)$.
\end{state}

In 1973, Peskine-Szpiro \cite{PS} proved the New Intersection
Theorem (NIT) in prime characteristic $p > 0$ using the Frobenius map.
Their work ushered characteristic $p$ methods to the forefront of
commutative algebra; by 1975, Hochster's  work \cite{H.=, H.CBMS}
established a reduction to characteristic $p > 0$ from
equicharacteristic zero to give a proof of the NIT in all
equicharacteristic rings.  In 1987, Roberts \cite{R.NIT,R.NIT2}
proved this theorem for mixed characteristic rings using 
local Chern characters.

In this paper, we give a new proof of the NIT in the mixed
characteristic case.\footnote{Our proof also applies to the
equicharacteristic $p > 0$ case, but it is considerably more
complicated than the original argument of Peskine-Spiro.}  This proof
parallels Roberts' original proof in many respects, but differs in that
it entirely avoids using local Chern theory.  Instead, we use Adams
operations on $K$-theory with supports, as developed by Gillet-Soul\'e
\cite{GS}. The difference between this proof and Roberts' proof is
much like the difference between his proof \cite{R.BAMS} of Serre's
Vanishing Conjecture and that of Gillet-Soul\'e \cite{GS}.

We wish to thank Paul Roberts for telling us about the key difficulty
in proving the NIT via Adams operations: the lack of a natural grading
for Grothendieck groups.

\section{$K$-theory, Adams operations, and the Frobenius}

Schemes in this paper are assumed to be quasi-projective over the
spectrum of a Noetherian ring.  Let $X$ be a scheme and $Z \subseteq
X$ be a closed subscheme.  Define the Grothen\-dieck group $K_0^Z(X)$
to be the abelian group generated by classes of bounded complexes of
locally free coherent
sheaves on $X$ with homology supported in $Z$, modulo the relations
coming from short exact sequences and quasi-isomorphisms. Similarly
define $G^Z_0(X)$ as the Groth\-end\-ieck group on bounded complexes
of coherent $\cO_X$-modules with homology supported in $Z$.
If $X = \Spec(A)$ and $Z = \Spec(A/I)$, then generators of $G_0^Z(X)$
(and $K_0^Z(X)$) are complexes of finitely generated (and projective)
$A$-modules.  

When $Z = X$, the support conditions on homology are
vacuous and the superscripts are omitted.  In this case, $G_0(X)$ is the
usual Groth\-end\-ieck group of coherent sheaves on $X$.  
Similarly
$K_0(X)$ is the usual Groth\-end\-ieck
group of locally free coherent sheaves on
$X$.  

If $Z'$ is a 
closed subscheme of $X$, then tensor product 
of complexes 
induces 
{\em cup product} and {\em cap product} pairings
\begin{equation*}
K_0^Z(X) \tensor K_0^{Z'}(X) \map{\cup} K_0^{Z\cap Z'}(X)
\quad
{\text{and}}
\quad
K_0^Z(X) \tensor G_0^{Z'}(X) \map{\cap} G_0^{Z \cap Z'}(X).
\end{equation*}

If $X = \Spec(A)$ for a local ring $A$ and $Z$ is the closed point, then the
Euler characteristic $\chi$ induces an isomorphism $G_0^Z(X) \iso
\bZ$.  Composing $\chi$ with cap product gives
\begin{equation*}
\chi([\bP] \cap [\bM]) =
\sideset{}{_i}\sum (-1)^i \length(H_i(\bP \otimes_A \bM)).
\end{equation*} 

For $f\colon Y \to X$ a morphism of schemes, set $W = f^{-1}(Z)$.
There is an induced pull-back map $f^*\colon K_0^Z(X) \to K_0^W(Y)$ in
$K$-theory.  When $f$ is projective (e.g., finite) there is a
push-forward map $f_*\colon G_0^W(Y) \to G_0^Z(X)$ in $G$-theory.
Push-forward along the inclusion $Z \into X$ induces an isomorphism
$\smash{G_0(Z) \map{\iso} G_0^Z(X)}$.

The projection formula relates push-forward and pull-back; we
visualize the formula as ``commutativity'' of the diagram:
\begin{equation} \label{projection}
\begin{split}
\xymatrix@R=1.41em{
K_0^Z(X) \tensor G_0(X)  \ar@<-3ex>[d]_{f^*}
                         \ar[r]^-\cap
  & G_0^Z(X) \\
K_0^W(Y) \tensor G_0(Y)  \ar@<-3ex>[u]_{f_*} 
                         \ar[r]_-\cap
  & G_0^W(Y) \ar[u]_{f_*}
}
\end{split}
\end{equation}
Thus $f_*(f^*\alpha \cap \beta) = \alpha \cap f_* \beta$ for all
$\alpha \in K_0^Z(X)$ and $\beta \in G_0(Y)$.  For $f$ a finite map
between spectra of commutative rings, the projection formula follows
from associativity and cancellation of tensor products.

The main uses of the projection formula within this paper are 
\begin{enumerate}
\item[(i)] when $Y$ is a closed subscheme of $X$ and $f$ is the
inclusion, and 

\item[(ii)] when $Y = X$ has characteristic $p > 0$ and $f$ is the
Frobenius (when it is a finite map).

\end{enumerate}

In \cite{GS}, Gillet-Soul\'e establish Adams operations $\Adams^k$ for
$k \geq 1$ on the groups $K_0^Z(X)$.  These are natural endomorphisms
extending the usual Adams operations on $K_0^X(X) = K_0(X)$. These
Adams operations have the following properties.
\begin{enumerate}
\item[(A1)] 
$\Adams^k \colon K_0^Z(X) \to K^Z_0(X)$ is an abelian group
endomorphism.

\item[(A2)] $\Adams^k(\alpha \cup \beta) = \Adams^k(\alpha) \cup
\Adams^k(\beta)$, for all $\alpha \in K_0^Z(X)$ and $\beta \in
K_0^{Z'}(X)$.

\item[(A3)]  $\Adams^k$ is functorial with respect to pull-back:  $\Adams^k
f^* \alpha = f^* \Adams^k \alpha$.  

\item[(A4)] On an affine scheme, if $K(r)$ is the Koszul complex on
one ring element $r$, then $\Adams^k([K(r)]) = k [K(r)]$.
\end{enumerate}
When $Z \hspace{-2pt} = \hspace{-2pt}X \hspace{-2pt}= \hspace{-2pt}
Z'$, then $\Adams^k$ is a ring endomorphism of \mbox{$K_0(X)
  \hspace{-2pt} = \hspace{-2pt} K_0^X(X)$}.

Our proof of the New Intersection Theorem involves passing between a
mixed characteristic ring and its reduction modulo $p$. In
both contexts the Adams operations are available while the Frobenius
map only exists in characteristic $p$. 

\begin{thm} \label{AdamsFrobenius}
Let $A$ be a Noetherian ring of characteristic $p$.  Let $X$ be
quasi-projective over ${\emph{\Spec}}(A)$ and let $Z$ be a closed
subscheme of $X$.  Write $\Frob\colon X \to X$ for the Frobenius
endomorphism.  Then the $p$-th Adams operation and the pull-back by
Frobenius coincide upon capping with classes in $G_0(X)$:
\begin{equation*}
\Adams^p \alpha \cap \beta = \Frob^* \alpha \cap \beta \in G_0^Z(X)
\end{equation*}
for all $\alpha \in K_0^Z(X)$ and all $\beta \in G_0(X)$.  
\end{thm}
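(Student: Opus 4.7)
My plan is to first reduce the cap-product identity to an absolute identity in $G$-theory with supports (eliminating $\beta$), and then to verify the reduced identity on an explicit family of Koszul classes.

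For the reduction, by devissage, $G_0(X)$ is generated by classes $i_*[\cO_Y]$ for closed immersions $i\colon Y \hookrightarrow X$. For such a $\beta$, the projection formula~(\ref{projection}), axiom (A3), and naturality of the Frobenius (which follows from $\Frob_X \circ i = i \circ \Frob_Y$) together give
\begin{equation*}
\Adams^p \alpha \cap i_*[\cO_Y] = i_*(\Adams^p i^*\alpha)
\qquad\text{and}\qquad
\Frob^* \alpha \cap i_*[\cO_Y] = i_*(\Frob^* i^*\alpha),
\end{equation*}
where the terms on the right are interpreted via the natural map $K_0^{Y \cap Z}(Y) \to G_0^{Y\cap Z}(Y)$ (since capping with $[\cO_Y]_Y$ is that forgetful map). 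Thus it suffices to show the absolute identity $\Adams^p \gamma = \Frob^*\gamma$ in $G_0^Z(X)$ for every quasi-projective $X$ in characteristic $p$ and every $\gamma \in K_0^Z(X)$.

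To verify this reduced identity, I would aim to reduce further to classes of the form $\gamma = [K(r_1)] \cup \cdots \cup [K(r_n)]$, cup products of one-variable Koszul complexes. For such $\gamma$, axioms (A2) and (A4) yield $\Adams^p \gamma = p^n \gamma$, while the multiplicativity of $\Frob^*$ for cup product together with $\Frob^*[K(r)] = [K(r^p)]$ gives $\Frob^*\gamma = [K(r_1^p)] \cup \cdots \cup [K(r_n^p)]$. The two agree in $G_0^Z$ via the identity $[K(r^p)] = p[K(r)]$ in $G_0^{V(r)}$, which comes from the filtration of $A/(r^p)$ by the submodules $(r^i)/(r^p)$ whose successive quotients are all isomorphic to $A/(r)$.

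The main obstacle is justifying this final reduction to Koszul cup-products: a general perfect complex supported on $Z$ is not of this form, even in $G_0^Z(X)$. I expect the solution to require either a splitting-principle argument on an auxiliary flag-type scheme $\pi\colon Y \to X$ where the pulled-back complex decomposes into Koszul pieces (using injectivity of $\pi^*$ in the relevant Grothen\-dieck groups with supports), or a direct appeal to the Gillet--Soul\'e simplicial model of $\Adams^p$ built from the cyclic tensor power $P^{\otimes p}$ with its $\bZ/p$-action, showing that in characteristic $p$ this model collapses to the Frobenius pull-back at the level of $G$-theory.
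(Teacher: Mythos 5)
Your overall strategy -- reduce to simple complexes by a splitting principle and verify directly there -- is the right circle of ideas and matches the paper's, but the way you propose to close the gap is likely to fail, and this is exactly the subtle point the theorem statement is engineered to avoid. After your devissage reduction, you still face a statement of the form ``$\Adams^p\gamma$ and $\Frob^*\gamma$ have the same image in $G_0^{Z}(Y)$''; this is just the original cap-product statement with $\beta=[\cO_Y]$, so the devissage step buys nothing. At that point you propose to pull back along a splitting-principle map $\pi\colon Y'\to Y$ and invoke \emph{injectivity of $\pi^*$}. That is not available and is precisely the obstruction noted in the Remark after the theorem: injectivity of $\pi^*$ on $K_0^Z$ (or even on the image in $G_0^Z$) is what one would need to prove $\Adams^p=\Frob^*$ as operators on $K_0^Z(X)$, which the paper explicitly does \emph{not} claim. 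The paper's actual mechanism is dual: the splitting principle guarantees \emph{surjectivity of $\pi_*$ on $G_0$}, so one writes $\beta=\pi_*\beta'$ and pushes the verification down via the projection formula, $\Adams^p\alpha\cap\pi_*\beta'=\pi_*(\Adams^p\pi^*\alpha\cap\beta')$. No injectivity is needed; this is why the theorem is stated in terms of cap products against $G_0(X)$ in the first place. Your reformulation as an ``absolute'' identity in $G_0^Z(X)$ actually obscures the move that makes the argument work, because it hides the $\beta$ that you must re-parametrize by $\pi_*$.

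Two smaller points. First, the splitting principle of \cite[18.3.12(4)]{F} produces a $\bZ$-linear combination of \emph{elementary complexes} (two-term complexes of line bundles), not cup products of one-variable Koszul complexes; the paper verifies $\Adams^p=\Frob^*$ directly on such complexes by citing \cite[3.2(c)]{KR}, which handles the twist by line bundles. Second, your filtration argument for $[K(r^p)]=p[K(r)]$, via the filtration of $A/(r^p)$ by $(r^i)/(r^p)$, is only correct as stated when $r$ is a nonzerodivisor (otherwise $[K(r)]\ne[A/r]$ in $G_0^{V(r)}$ and the successive quotients need not be $A/(r)$); working at the level of complexes, as the elementary-complex verification does, avoids this.
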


\begin{rem} In the remark following \cite[4.13]{GS}, 
Gillet-Soul\'e assert (without proof) that $\Adams^p = \Frob^*$ on
  $K_0^Z(X)$. Our proof of Theorem \ref{AdamsFrobenius} does not prove
  this stronger statement.
\end{rem}

\begin{proof} 
Let $\cL_0$ and $\cL_1$ be line bundles on $X$.  By definition, a
complex of the form 
$\cdots \to 0 \to \cL_0 \to 0 \to \cdots$ or $\cdots \to 0 \to
\cL_1 \to \cL_0 \to 0 \to \cdots$, where $\cL_i$ lies in degree $i$,  
is called {\emph{elementary}}.  When
$\alpha$ is the class of an elementary complex, the theorem holds because
\begin{equation*}
\Frob^*(\alpha) = [ \cdots \to 0 \to (\cL_0)^{\tensor_p} \to 0 \to
  \cdots ] = \Adams^p(\alpha)
\end{equation*}
and 
\begin{equation*}
\Frob^*(\alpha) = [ \cdots \to 0 \to (\cL_1)^{\tensor_p}
  \map{\gamma^{\tensor_p}} (\cL_0)^{\tensor_p} \to 0 \to
  \cdots ] = \Adams^p(\alpha);
\end{equation*}
the last equality follows from \cite[3.2(c)]{KR}.  

The splitting principle for complexes \cite[18.3.12(4)]{F} gives, for
each $\alpha \in K_0^Z(X)$, a
projective morphism of schemes $\pi \colon Y \to X$ such that
\begin{enumerate}
\item[(i)] the map $\pi_* \colon G_0(X') \onto G_0(X)$ is surjective, and

\item[(ii)]  $\pi^*(\alpha)$ is a
    $\bZ$-linear combination of elementary complexes supported in
    $W := \pi^{-1}(Z)$.
\end{enumerate}
In this case, with $\beta = \pi_*\beta'$, use the projection
formula and functoriality of $\Psi^p$ and $\Frob^*$ to get
\begin{eqnarray*}
\Psi^p(\alpha) \!\cap \beta \!=\! \Psi^p(\alpha) \!\cap \pi_*\beta' \!=\!
& \hspace*{-1ex} \pi_*(\pi^*\Psi^p(\alpha) \!\cap \beta')&  
\hspace*{-1ex} \!=\! \pi_*(\Psi^p\pi^*\alpha \!\cap \beta') = \\
\pi_*(\phi^*\pi^*\alpha \!\cap \beta') \!=\! 
& \hspace*{-1ex}\pi_*(\pi^*\phi^*(\alpha) \!\cap \beta') & 
\hspace*{-1ex} \!=\! \phi^*(\alpha) \!\cap \pi_*\beta' 
\!=\! \phi^*(\alpha) \!\cap \beta,
\end{eqnarray*}
where the middle equality holds because $\pi^*(\alpha)$ decomposes
into a $\bZ$-linear combination of elementary complexes.
\end{proof}

We replace the use of local Chern characters in Roberts'
proof of the NIT with the use of the Gillet-Soul\'e Adams
operations. Local Chern characters take values in the graded Chow
group of a scheme.  By contrast, generally the Grothendieck group
$G_0(X)$ is ungraded. However, in the equicharacteristic $p > 0$ case,
the action of Frobenius provides a grading on $G_0(X)$.

\begin{lem}[see {\cite[\S 2]{Kurano}}]  \label{LemA} 
Let $(B, \m)$ be a local ring of characteristic $p > 0$ and dimension
$d$. Assume the residue field $B/\m$ is perfect and the Frobenius
map on $B$ is finite.  Set {\mbox{$Y = \emph{\Spec\,} B$}} and let
$\Frob$ be the scheme map induced by Frobenius.  Then the action of
$\Frob_*$ on the $\bQ$-vector space $G_0(Y)_\bQ := G_0(Y) \otimes_\bZ
\bQ$ is diagonalizable and its eigenvalues are a subset of $\{p^i \, |
\, i = 0, \dots, d\}$.  That is, $G_0(Y)_\bQ$ decomposes as 
\begin{equation*}
G_0(Y)_\bQ = \sideset{}{^d_0}\bigoplus V_j
\end{equation*}
where the action of $\Frob_*$ on $V_j$ is multiplication by
$p^j$. Moreover, if $B$ is a domain, then $V_d$ is one-dimensional,
spanned by $[B]$. 
\end{lem}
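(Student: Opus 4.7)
The plan is to exploit the topological filtration on $G_0(Y)$ by dimension of support and show that $\Frob_*$ scales the $i$th graded piece by $p^i$; diagonalizability then follows from distinctness of the eigenvalues $p^0, p^1, \ldots, p^d$.

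Concretely, I would define $F_i G_0(Y)$ to be the subgroup generated by the classes $[B/\mathfrak p]$ with $\dim B/\mathfrak p \leq i$, obtaining a finite filtration $0 = F_{-1} \subseteq F_0 \subseteq \cdots \subseteq F_d = G_0(Y)$. Since $\Frob$ is a bijection on the underlying space of $Y$, pushforward by $\Frob$ preserves this filtration. The heart of the argument is the congruence
\[
\Frob_*[B/\mathfrak p] \equiv p^i\, [B/\mathfrak p] \pmod{F_{i-1} G_0(Y)} \quad\text{whenever } \dim B/\mathfrak p = i.
\]
Once this holds, set $T_j := \Frob_* - p^j \cdot \mathrm{id}$; then $T_j$ maps $F_j$ into $F_{j-1}$, so the composite $T_0 T_1 \cdots T_d$ annihilates $F_d = G_0(Y)$. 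After tensoring with $\bQ$, the minimal polynomial of $\Frob_*$ divides $\prod_{j=0}^d (X - p^j)$, which has simple roots, giving an eigenspace decomposition $G_0(Y)_\bQ = \bigoplus_{j=0}^d V_j$ with $\Frob_*$ acting as $p^j$ on $V_j$.

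The main obstacle is the rank computation underlying this congruence. After reducing to $B/\mathfrak p$, one must show that for an $F$-finite local domain $(R, \mathfrak n)$ of dimension $i$ with perfect residue field, the generic rank of $\Frob_* R$ over $R$ equals $p^i$, i.e., $[\mathrm{Frac}(R):\mathrm{Frac}(R)^p] = p^i$. This classical fact, essentially due to Kunz, admits several proofs; the cleanest is via Noether normalization to reduce to a regular ring, where $\Frob_* R$ is free of rank $p^{\dim R}$ when the residue field is perfect. Feeding this back, if $\cF$ is a coherent sheaf with $\dim \mathrm{supp}(\cF) \leq i$, writing $[\cF] = \sum_{\mathfrak p} \ell_{B_{\mathfrak p}}(\cF_\mathfrak p) [B/\mathfrak p] \pmod{F_{i-1}}$ (sum over primes of dimension $i$) gives $\Frob_*[\cF] \equiv p^i [\cF] \pmod{F_{i-1}}$.

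The domain statement follows quickly. Since $B$ has unique minimal prime $(0)$ and $\dim B = d$, the quotient $F_d G_0(Y)/F_{d-1} G_0(Y)$ is generated by $[B]$, and $[B]$ is nonzero there because it has generic rank $1$. Hence $(F_d/F_{d-1}) \otimes_\bZ \bQ$ is one-dimensional. Since $V_d$ projects isomorphically onto this quotient under the eigenspace-to-graded-piece correspondence, $V_d = \bQ \cdot [B]$ is one-dimensional, spanned by $[B]$.
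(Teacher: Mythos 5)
Your approach is correct in its essentials, and it is a genuinely different (though closely related) route from the one in the paper. The paper first reduces to the case of a domain $B$, then inducts on dimension: the localization sequence $\bigoplus_Z G_0(Z) \to G_0(Y) \to G_0(\operatorname{Frac} B) \to 0$ presents $G_0(Y)_\bQ$ as an extension of $G_0(\operatorname{Frac} B)_\bQ \iso \bQ$ (on which $\Frob_*$ acts by $p^d$) by the image of $\bigoplus_Z G_0(Z)_\bQ$, to which the inductive hypothesis applies; diagonalizability follows because an extension of diagonalizable endomorphisms with disjoint spectra is diagonalizable. You instead work directly with the full topological filtration $F_\bullet G_0(Y)$ and the operators $T_j = \Frob_* - p^j$, obtaining $\prod_j T_j = 0$ globally, so the minimal polynomial of $\Frob_*$ on $G_0(Y)_\bQ$ is squarefree. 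This avoids both the induction and the preliminary reduction to a domain, and makes the mechanism more transparent. The two arguments share the same analytic heart: the Kunz-type rank computation $[\operatorname{Frac}(R):\operatorname{Frac}(R)^p] = p^{\dim R}$ for an $F$-finite local domain with perfect residue field, which the paper accesses through the action of $\Frob_*$ on $G_0(\operatorname{Frac} B)_\bQ$ (citing Roberts' book) and which you access through the congruence $\Frob_*[B/\mathfrak p] \equiv p^i[B/\mathfrak p] \pmod{F_{i-1}}$.

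One step deserves a flag. You conclude from $V_d \iso F_d/F_{d-1}$ and $F_d/F_{d-1} = \bQ\cdot\overline{[B]}$ that $V_d = \bQ\cdot[B]$. The isomorphism only gives $V_d = \bQ\cdot v$ for some $v$ whose image in $F_d/F_{d-1}$ equals $\overline{[B]}$, i.e.\ $v = [B] + (\text{terms in } F_{d-1})$; it does not give $v = [B]$ on the nose unless one independently knows $\Frob_*[B] = p^d[B]$ in $G_0(Y)_\bQ$, which is not automatic for a non-regular domain. To be fair, the paper's own proof does not address this point either, and the "spanned by $[B]$" clause is never used in the proof of the main theorem, where only the existence of the eigenspace decomposition $\beta = \sum_j \beta_j$ and the polynomial formula $\beta_d = q_d(\Frob_*)\beta$ are needed. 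So this is a gap shared with the source, not a defect peculiar to your argument, but you should either supply a justification that $[B]$ is itself a $p^d$-eigenvector or weaken the conclusion to ``$V_d$ is one-dimensional and the $V_d$-component of $[B]$ is nonzero.''
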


\begin{proof}
If $q_1, \dots, q_m$ are the minimal primes of $B$ and $Y_i = \Spec\,
B/q_i$, then the map $\bigoplus_i G_0(Y_i) \to G_0(Y)$ is onto and
commutes with $\phi_*$.   Since the quotient
of a diagonalizable endomorphism of a vector space is
diagonalizable, we may additionally assume that $B$ is a domain, say
with quotient field $E$.  We proceed by induction on $d$.  

The Localization Theorem for $G$-theory 
gives an exact sequence
\begin{equation*}
\textstyle
\bigoplus_Z G_0(Z) \to G_0(Y) \to G_0(\Spec(E)) \to 0
\end{equation*}
where $Z$ ranges over all codimension one integral closed subschemes
of $Y$; these correspond to height one prime ideals of $B$. Both maps
in this sequence commute with $\phi_*$.  

By induction, the endomorphism $\phi_*$ of $\bigoplus_Z G_0(Z)_\bQ$ is
diagonalizable and its eigenvalues are a subset of $\{p^i \, | \, i =
0, \dots, d-1\}$.  Hence the same holds for its image in
$G_0(Y)_\bQ$. By \cite[p. 125]{R.Book2}, the action of $\phi_*$ on
$G_0(E)_\bQ \iso \bQ$ is multiplication by $p^d$.  The result now
follows, since an extension of diagonalizable endomorphisms of
vector spaces is again diagonalizable provided the two sets of
eigenvalues are disjoint.
\end{proof}

So every $\beta \in G_0(Y)_\bQ$ uniquely decomposes as $\beta =
\beta_0 + \cdots + \beta_d$ such that $\phi_*(\beta_j) = p^j \beta_j$
for $j = 0, \dots, \dm(Y)$.  
Define polynomials $q_j(t) =
\sum_{0}^d a_i t^i$ by $q_j(t) = \left(\Pi \, t - p^i\right)\, /\,
\left( \Pi \, p^j - p^i \right) \in \bQ[t]$ where both products run
over the set $\{0, \ldots, d\} \setminus \{j\}$.  Then the $\beta_j$'s
are found by
\begin{equation} \label{E1}
\beta_j = q_j(\Frob_*)\beta 
=  a_0 + a_1\Frob_*\beta + \cdots + a_{d}\Frob_*^{d}\beta.
\end{equation}

\section{Dutta multiplicity vanishes on reduced complexes}

\begin{thm} \label{thm1}
Let $(A,\m)$ be a local domain with perfect residue field.  Take a
nonzero $x \in \m$ and set $B = A/x$.  Assume that $B$ is
of characteristic $p > 0$ and that the Frobenius map
is finite on $B$.  If $\bF$ is a bounded complex of finitely generated
free $A$-modules whose homology has finite length, then the Dutta
multiplicity of $\bF \otimes_A B$ is zero.
\end{thm}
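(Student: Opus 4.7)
The plan is to prove the sharper vanishing
\[
\chi([\bG] \cap \phi_*^n[B]_Y) = 0
\]
for every $n \geq 0$, where $\bG = \bF \otimes_A B$, $Y = \Spec B$, $X = \Spec A$, and $\iota \colon Y \hookrightarrow X$ is the closed immersion. By Lemma~\ref{LemA} together with formula~\eqref{E1}, the top-eigenspace component $\beta_d$ of $[B]_Y$---whose cap with $[\bG]$ computes the Dutta multiplicity---is a rational linear combination of the $\phi_*^j[B]_Y$ for $j = 0,\dots,d$. Hence the pointwise vanishing above forces the Dutta multiplicity to vanish.

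To get this vanishing, I would first use Theorem~\ref{AdamsFrobenius}, iterated $n$ times, together with the projection formula for the finite map $\phi^n \colon Y \to Y$. Because $W$ is a single point whose residue field is perfect (it equals $A/\m$), $\phi_*^n$ acts as the identity on $G_0^W(Y) \cong \bZ$, and one obtains
\[
\chi([\bG] \cap \phi_*^n[B]_Y) = \chi(\Psi^{p^n}[\bG] \cap [B]_Y).
\]
Next, since $[\bG] = \iota^*[\bF]$, the functoriality property (A3) of the Adams operations gives $\Psi^{p^n}[\bG] = \iota^*\Psi^{p^n}[\bF]$. The projection formula for $\iota$---coupled with the fact that $\iota_* \colon G_0^W(Y) \to G_0^z(X)$ is an isomorphism of copies of $\bZ$ preserving Euler characteristic---transports the whole expression over to $X$:
\[
\chi(\Psi^{p^n}[\bG] \cap [B]_Y) = \chi(\Psi^{p^n}[\bF] \cap \iota_*[B]_Y) = \chi(\Psi^{p^n}[\bF] \cap [B]_X).
\]

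The proof then closes with the elementary observation that $[B]_X = 0$ in $G_0(X)$: since $A$ is a domain and $x \neq 0$, $x$ is a non-zero-divisor, and the short exact sequence $0 \to A \xrightarrow{x} A \to B \to 0$ forces $[B]_X = 0$. Hence $\Psi^{p^n}[\bF] \cap [B]_X = 0$ for every $n$, and the chain of equalities yields the desired vanishing.

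The main conceptual point---and the only step that requires some care---is recognizing that Theorem~\ref{AdamsFrobenius} is exactly the device one needs to trade the Frobenius pushforward on $Y$ (which is \emph{not} functorial along $\iota$) for the Adams operation on $K$-theory (which \emph{is} functorial along $\iota^*$). Once this trade is made, the Koszul-style vanishing of $[B]_X$ finishes the argument, and no genuine computational obstacle remains.
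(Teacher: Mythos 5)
Your proof is correct and takes essentially the same approach as the paper's: projection formula for the finite Frobenius, Theorem~\ref{AdamsFrobenius} together with (A3) to convert iterated Frobenius pullback into iterated Adams operations, functoriality along $\iota^*$, the projection formula for $\iota$, and the Koszul vanishing $\iota_*[B] = 0$ in $G_0(X)$. One observation worth recording: the ``sharper vanishing'' $\chi\big([\bG]\cap\phi_*^n[B]\big) = 0$ you isolate actually renders Lemma~\ref{LemA} and formula~(\ref{E1}) unnecessary, since the projection formula and the perfectness of the residue field already give $\chi\big((\phi^n)^*[\bG]\big) = \chi\big([\bG]\cap\phi_*^n[B]\big)$, so the Dutta multiplicity $\lim_n p^{-dn}\chi\big((\phi^n)^*[\bG]\big)$ vanishes term-by-term straight from its definition, with no eigenvector decomposition required.
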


\begin{cor}
The New Intersection Theorem holds if the residue characteristic is
positive.
\end{cor}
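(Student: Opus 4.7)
The plan is to deduce the New Intersection Theorem in residue characteristic $p > 0$ by combining Theorem \ref{thm1} with the Peskine-Szpiro NIT in equicharacteristic $p$. The strategy is to transfer the given $A$-complex to a characteristic-$p$ quotient $B$ of $A$, where Peskine-Szpiro's theorem provides a length lower bound and Theorem \ref{thm1} provides a vanishing of the Dutta multiplicity.

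First I reduce to the case where $(A,\m)$ is a complete local domain with perfect residue field by standard techniques: completion preserves dimension and the finite-length, nonzero homology property; a faithfully flat extension (via Cohen's structure theorem) enables the passage to a perfect residue field; and passing to $A/\mathfrak{p}$ for an appropriate minimal prime $\mathfrak{p}$ of maximal dimension places us in the domain case. The equicharacteristic $p$ case is then immediate from Peskine-Szpiro, so I focus on mixed characteristic, where $p \in \m$ is a nonzero nonunit. Set $B := A/pA$: this is a complete local ring of characteristic $p$ with perfect residue field, finite Frobenius, and $\dm B = \dm A - 1$, meeting the hypotheses of Theorem \ref{thm1}.

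Assume for contradiction that $\bF$ is a bounded complex of finite-rank free $A$-modules of length $n < \dm A$ with nonzero finite-length homology, and let $\bG := \bF \tensor_A B$. The short exact sequence of complexes $0 \to \bF \xrightarrow{\,p\,} \bF \to \bG \to 0$ induces a long exact sequence in homology; since $p$ acts nilpotently on each finite-length $H_i(\bF)$, one reads off that $H_*(\bG)$ is of finite length and is nonzero in every degree where $H_*(\bF)$ is (by Nakayama, $H_i(\bF)/p H_i(\bF)$ embeds as a nonzero submodule of $H_i(\bG)$). Peskine-Szpiro's NIT applied to $\bG$ over $B$ forces $n \geq \dm B = \dm A - 1$, hence $n = \dm B$. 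Theorem \ref{thm1} then yields $\chi_\infty(\bG) = 0$.

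To complete the contradiction, one invokes the positivity of Dutta multiplicity in this borderline setting: a bounded complex of finite-rank free $B$-modules of length exactly $\dm B$ with nonzero finite-length homology should satisfy $\chi_\infty > 0$. When $B$ is Cohen-Macaulay this is Dutta's classical result, with $\chi_\infty$ reducing to the Hilbert-Kunz multiplicity of $H_0(\bG)$ after shifting to arrange $H_0(\bG) \neq 0$. In general one reads off positivity from the Frobenius eigen-decomposition of Lemma \ref{LemA}, expressing $\chi_\infty(\bG)$ as $\chi([\bG] \cap \beta_d)$ where $\beta_d$ is the nonzero $V_d$-component of $[B]$, and combining this with the Peskine-Szpiro Hilbert-Kunz lower bound $\length\bigl(H_0(\Frob^{e*}\bG)\bigr) \geq c \cdot p^{e \dm B}$. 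The main obstacle is making this last step rigorous in the non-Cohen-Macaulay case: one must rule out the possibility that negative contributions from higher $H_i(\bG)$ cancel the positive Hilbert-Kunz contribution from $H_0$ in the alternating sum defining $\chi_\infty$, and here the constraint $n = \dm B$ on the length of $\bG$ is essential.
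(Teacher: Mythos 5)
Your reduction steps (completion, faithfully flat extension to a perfect residue field, passing to a minimal prime, Nakayama to preserve nonexactness) match the paper's, and the passage to $B = A/pA$ with finite Frobenius is correct. The genuine gap is the positivity of $\chi_\infty(\bG)$, which you yourself flag as ``the main obstacle.'' The paper does not attempt to prove this from scratch; it cites a theorem of Roberts (\cite{R.NIT} or \cite[7.3.5]{R.Book2}): if $\bF$ is a complex over $A$ of length $n < \dim A$ with finite-length nonzero homology and $A \to B$ is a map to a characteristic-$p$ local ring with $n \leq \dim B$, then $\chi_\infty(\bF \otimes_A B) > 0$. This is the exact complement to Theorem~\ref{thm1}, and the corollary is an immediate consequence of the two.

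Your route via Peskine--Szpiro to force $n = \dim B$ is sound but does not help with positivity. The statement you then need --- that \emph{every} bounded free complex over $B$ of length exactly $\dim B$ with nonzero finite-length homology has positive Dutta multiplicity --- is a different and strictly harder assertion. It refers only to $B$, and for general non-Cohen--Macaulay $B$ it is not a theorem; the cancellation among higher $H_i$ in the alternating sum that you worry about is a real obstruction, not a technicality to be waved away by invoking Lemma~\ref{LemA}. Roberts' positivity theorem sidesteps this because it exploits the fact that $\bF$ extends to the larger ring $A$; the excess $\dim A - n \geq 1$ is what drives the sign. In short: the citation to Roberts' positivity result is not an optional shortcut the paper takes --- it is the essential second half of the argument, and your proposal does not supply a substitute for it.
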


\begin{proof}[Proof of Corollary] 
Let $(A, \m, k)$ be a local ring of dimension $d$ with $\chr k > 0$.
Let $\bF$ be a complex $0 \to F_n \to \cdots \to F_1 \to F_0 \to 0$ of
finite rank free $A$-modules with nonzero homology of finite length.
Assume $n < d$.  Then for any morphism $A \to B$ with $B$ of
characteristic $p$ and $n \leq \dm B$, the Dutta multiplicity of $\bF
\otimes_A B$ is positive; see \cite{R.NIT} or \cite[7.3.5]{R.Book2}.

Our next aim is to employ the theorem.  Use a faithfully flat
extension to reduce to the case when $A$ is complete and $k$ is
algebraically closed (and hence perfect).  Kill a minimal prime $P$
with $\dm(A/P) = d$ to reduce to when $A$ is a domain; note $\bF
\tensor A/P$ remains non-exact by Nakayama's Lemma.

If $A$ has mixed characteristic, then there is a prime integer $p$
in $\m$, so take $x = p$. If $A$ is equicharacteristic, then take $x$ to
be any nonzero element in $\m$. (If $\m = 0$, there was nothing to
prove.)  Set $B = A/x$. By the Cohen Structure Theorem, $B \iso
k[[Y_1, \dots, Y_s]]/I$ and hence, in particular, the Frobenius map on
$B$ is finite.  Apply the theorem to arrive at the contradiction that
the Dutta multiplicity is also zero.
\end{proof}

\begin{proof}[Proof of Theorem] 
Let $X = \Spec(A)$, $Y = \Spec(B)$ and $\iota: Y \into X$
be the canonical closed immersion. Set $d = \dm(Y)$ so that $\dm(X) =
d+1$.  Take $\beta = [B] \in G_0(Y)_\bQ$; observe $\bF \cap \beta =
[\bF \otimes_A B] = \iota^*(\bF)$.

The complex $\bF$ has homology supported in $Z = \Spec(A/\m)$ and
$i^*(\bF)$ is supported in $i^{-1}(Z) = W$.  By definition, the
Dutta multiplicity is
\begin{equation*}
\chi_\infty(i^*\bF) 
:= 
\lim_{n \to \infty} {p^{-dn}} \cdot {\chi(\phi^n)^*(\iota^*\bF)} =
\lim_{n \to \infty} {p^{-dn}} \cdot {\chi\big((\phi^n)^*(\iota^*\bF) \cap
  \beta\big)};
\end{equation*}
the last equality holds since $- \cap \beta$ is the identity on
$G_0^W(Y)$.  The element $(\phi^n)^*(\iota^*\bF) \cap \beta$ is in
$G_0(W)$ and $\chi :G_0^W(Y) \iso \bZ$. Since the residue field is
perfect, $\phi_*$ is the identity map on $G_0^W(Y) \iso G_0(W)$, so
\begin{equation*}
\chi_\infty(\iota^*\bF) = 
\lim_{n \to \infty} {p^{-dn}} \cdot 
{\chi \big(\phi_*^n  \big(\, (\phi^n)^*(\iota^*\bF) \cap \beta \big)\, \big)}.
\end{equation*}

By Lemma \ref{LemA} there is a decomposition $\beta = \sum_0^d \beta_j$
into eigenvectors for $\Frob_*$.  The projection formula
(\ref{projection}) for $\Frob^n$ gives
\begin{eqnarray*}
{p^{-dn}}  \Frob^n_* \big( {(\Frob^*)^n \iota^*(\bF) \, \cap \,
  \beta} \big)
& = & 
{p^{-dn}}
\Frob^n_*  \big( {(\Frob^*)^n \iota^*(\bF) \, \cap \, \big(\beta_0 + 
                              \cdots + \beta_d\big)}\, \big) \\
& = & 
\iota^*(\bF) \, \cap \, p^{-nd} \Frob_*^n\big(\beta_0 + \beta_1 + \cdots
                               + \beta_d\big) \\
& = & 
\iota^*[\bF] \, \cap \, p^{-nd}\big(\beta_0 + p^n \beta_1 + \cdots 
                                 + p^{nd}\beta_d\big) \\
& \hspace*{-15em} = & \hspace*{-8em}
\big( \iota^*[\bF] \cap  p^{-nd}\beta_0 \big) + 
                 \big( \iota^*[\bF] \cap p^{n-nd}\beta_1 \big) 
               + \cdots + \big( \iota^*[\bF] \cap \beta_d
	       \big). 
\end{eqnarray*}
Applying $\chi$ and taking $n \to \infty$ gives 
\begin{equation*} \label{E3}
\chi_\infty(\iota^*\bF) = \chi\big(\iota^*(\bF) \cap \beta_d\big).
\end{equation*}

Now use (\ref{E1}) to get $\beta_d = q_d(\Frob_*)(\beta)$ where
$q_d(t) = a_0 + a_1t + \cdots + a_{d}t^{d}$ is a polynomial
with rational coefficients.  Hence
\begin{eqnarray*}
\chi_\infty(\iota^*\bF) & = &
\chi \big( \iota^*\bF \cap q_d(\Frob_*)(\beta) \big) \\
& = & 
\chi \big( \iota^*\bF \cap \sideset{}{^d_0}\sum a_j \Frob_*^j(\beta) \big) \\ 
& = & 
\sideset{}{^d_0}\sum a_j  \chi \big(\Frob_*^j \Big( (\Frob^*)^j \iota^*\bF \, \cap
\, \beta \Big)\big), \\
\end{eqnarray*}
where the last equality uses the projection formula (\ref{projection})
for the $\Frob^j$'s.  By Theorem \ref{AdamsFrobenius} and using (A3)
\begin{eqnarray*}
\chi_\infty(\iota^*\bF) & = &
\sideset{}{^d_0}\sum a_j \chi \big( \Frob_*^j 
        \Big( (\Adams^p)^j (\iota^*\bF) \, \cap \, \beta \Big) \big) \\
&  = & 
\sideset{}{^d_0}\sum a_j  \chi \big( \Frob_*^j 
        \Big( \iota^* ((\Adams^p)^j\bF) \, \cap \, \beta \Big) \big).\\
\end{eqnarray*}
For each $j$ the element $\iota^* ((\Adams^p)^j\bF) \, \cap \, \beta$
belongs to $G_0^W(Y)$, and, since the residue field is perfect,
$\Frob_*$ is the identity on $G_0^W(Y)$.  Also,
under the identifications, via $\chi$,
of $G_0^W(Y)$ and $G_0^Z(X)$ with $\bZ$, the map
 $\iota_*\colon G_0(W)
\to G_0(Z)$ is the identity map. Thus
\begin{equation*}
\chi_\infty(\iota^*\bF) = 
\sideset{}{^d_0}\sum a_j  \chi \left( \iota^* ((\Adams^p)^j\bF) \, \cap \, \beta
\right)
=
\sideset{}{^d_0}\sum a_j  \chi \big( \iota_* \Big( \iota^* ((\Adams^p)^j\bF) \,
\cap \, \beta \Big) \big)
\end{equation*}
Apply the projection formula for $\iota$ to get 
\begin{equation*}
\chi_\infty(\iota^*\bF) = 
\sideset{}{^d_0}\sum a_j  \chi  \big( ((\Adams^p)^j\bF) \,
\cap \, \iota_*(\beta) \big) 
\end{equation*}
But $\iota_*(\beta) = 0$ in $G_0(X)$ since there is the short exact
sequence 
\begin{equation*}
0 \to A \map{x} A \to B \to 0
\end{equation*}
of $A$-modules.
\end{proof}

\bibliography{NITrefs}
\end{document}